\providecommand\given{}
\newcommand\SetSymbol[1][]{%
  \nonscript\:#1\vert
  \allowbreak
  \nonscript\:
  \mathopen{}}
\DeclarePairedDelimiterX\Set[1]{\{}{\}}{%
  \renewcommand\given{\SetSymbol[\delimsize]}
  #1}
\DeclarePairedDelimiterXPP\pospart[1]{}{(}{)}{^+}{#1}
\DeclarePairedDelimiterXPP\negpart[1]{}{(}{)}{^-}{#1}
\newcommand\R{\mathbb{R}}
\newcommand\N{\mathbb{N}}
\newcommand\PP{\mathcal{P}}
\renewcommand\AA{\mathcal{A}}
\newtheorem{thm}{Theorem}[section]
\newtheorem{prop}[thm]{Proposition}
\newtheorem{cor}[thm]{Corollary}
\theoremstyle{definition}
\newtheorem{ex}[thm]{Example}
\DeclareMathOperator*{\gr}{gr}
\DeclareMathOperator*{\conv}{conv}
\DeclareMathOperator*{\cone}{cone}
\DeclareMathOperator*{\dom}{dom}
\newcommand{\leqnomode}{\tagsleft@true\let\veqno\@@leqno}
\newcommand{\reqnomode}{\tagsleft@false\let\veqno\@@eqno}
\title{Existence of solutions for polyhedral convex\\ set optimization problems}
\author{Andreas L{\"o}hne\thanks{Friedrich Schiller University Jena, Faculty of Mathematics and Computer Science, 07737 Jena, Germany, andreas.loehne@uni-jena.de}}
\begin{document}
\maketitle

\begin{abstract} \noindent Polyhedral convex set optimization problems are the simplest optimization problems with set-valued objective function. 
Their role in set optimization is  comparable to the role of linear programs in scalar optimization. Vector linear programs and multiple objective linear programs provide proper subclasses. 
In this article we choose a solution concept for arbitrary polyhedral convex set optimization problems out of several alternatives, show existence of solutions and characterize the existence of solutions in different ways. 
Two known results are obtained as particular cases, both with proofs being easier than the original ones:
The existence of solutions of bounded polyhedral convex set optimization problems and a characterization of the existence of solutions of vector linear programs.

\medskip
\noindent
{\bf Keywords:} set optimization, vector linear programming, multiple objective linear programming
\medskip

\noindent{\bf MSC 2010 Classification: 90C99, 90C29, 90C05}
\end{abstract}

\section{Introduction}

\noindent A set-valued mapping $F:\R^n\rightrightarrows \R^q$ is called \emph{polyhedral convex} if its graph
$$ \gr F \coloneqq \Set{(x,y) \in \R^n\times \R^q \given y \in F(x)}$$
is a convex polyhedron. A \emph{polyhedral convex set-optimization problem} is the problem to minimize a polyhedral convex set-valued map $F$ with respect to the partial ordering $\supseteq$ (set inclusion).
Even though using the ordering $\supseteq$ is no restriction of generality, it is sometimes preferable to use set relations defined by a polyhedral convex ordering cone $C \subseteq \R^q$. Such a cone $C$ defines a reflexive and transitive ordering on $\R^q$ by the usual convention
$$ y^1 \leq_C y^2 \;\iff\; y^2-y^1 \in C.$$
This ordering can be lifted up to a reflexive and transitive ordering on the power set of $\R^q$, defined by
$$ A^1 \preccurlyeq_C A^2  \;\iff\; A^1 + C \supseteq A^2 + C,$$
see e.g.\ \cite[Section 2.2]{HamHeyLoeRudSch15} for bibliographic notes.
If $F$ is replaced by a set-valued mapping $F_C:\R^n\rightrightarrows \R^q$ defined by
$$ F_C(x) \coloneqq F(x) + C,$$
we obtain the equivalence
$$ F(x^1) \preccurlyeq_C F(x^2) \;\iff\; F_C(x^1) \supseteq F_C(x^2),$$
which means that the set inclusion $\supseteq$ is always sufficient to describe statements involving the ordering $\preccurlyeq_C$. The graph of $F_C$ is obtained directly from the graph of $F$ as
$$ \gr F_C = \gr F + (\Set{0} \times C).$$ 

Not only the ordering cone but also possible affine constraints can be subsumed into the objective function. Indeed, instead of minimizing the polyhedral convex set-valued mapping $F$ under the constraints $A x \geq b$, one can minimize the polyhedral convex mapping
$$ \bar F (x) \coloneqq \left\{\begin{array}{cl}
	 F(x) &\text{ if } Ax \geq b \\
	 \emptyset &\text{ otherwise. }
\end{array} \right.$$ 
This leads to the following problem formulation. 

Let $F:\R^n\rightrightarrows \R^q$ be a polyhedral convex set-valued map and let $C \subseteq \R^q$ be a polyhedral convex cone. Based on the ordering $\supseteq$ we consider the \emph{polyhedral convex set optimization problem}
\leqnomode
\begin{gather}\label{eq:p}\tag{P}
  \min F_C(x)\; \text{ s.t. }\; x \in \R^n.
\end{gather}
\reqnomode
We define in this article a solution concept for \eqref{eq:p}, which is similar to the one in \cite{HeyLoe}. It extends the solution concept for bounded polyhedral convex set optimization problems introduced in \cite{LoeSch13} in a natural way. We prove existence of solutions under reasonable assumptions. 

This article is based on the \emph{complete lattice approach} to set optimization (see e.g.\ \cite{HamHeyLoeRudSch15} and the references therein), where in contrast to other approaches in the literature (see e.g.\ \cite{KhaTamZal15} as well as the discussion in \cite[Sections 2.2, 3.2, 7.5]{HamHeyLoeRudSch15}) the solution concept is based on both \emph{infimum attainment} and \emph{minimality} (rather than on minimality only). In vector optimization, the complete lattice approach \cite{LoeTam07, Loe11} can be seen as a systematic continuation of ideas which were already proposed in 1987 by Dauer \cite{Dauer87} under the term ``objective space analysis''. In vector linear programming and polyhedral convex set optimization, the complete lattice approach is crucial for many applications and results. Here we mention only three examples: Based on this approach, the equivalence between polyhedral projection, vector linear programming and multiple objective programming is shown in \cite{LoeWei16}. A second example from mathematical finance is risk minimization for set-valued measures of risk \cite{JouMebTou04,HamHey10,HamHeyRud11,HamHeyLoeRudSch15}. A third example from multivariate statistics is the computation of Tukey depth regions by a reformulation into a vector linear program \cite{LoeWei}. 

\section{The bounded case}

The goal of this section is twofold. On the one hand side we prefer to introduce our main proof technique for the easier case of bounded problems to come to the point as quickly as possible. On the other hand side we obtain here an alternative and easier proof of the already known result \cite[Corollary 4.5]{LoeSch13} which states that every feasible and bounded polyhedral convex set optimization problem has a solution. While the existence result in \cite{LoeSch13} is obtained from the correctness proof of a method that computes solutions, we directly prove existence of solutions here.

The \emph{upper image} of \eqref{eq:p} is the set
$$ \PP\coloneqq C + \bigcup_{x \in \R^n} F(x).$$
$\PP$ is a convex polyhedron and we have $\bigcup_{x \in \R^n} (F(x) + C) = C + \bigcup_{x \in \R^n} F(x)$, see e.g.\ \cite{HeyLoe}. 
The upper image $\PP$ of \eqref{eq:p} can be interpreted as the infimum of $F$ over $\R^n$ with respect to $\preccurlyeq_C$ and likewise as the infimum of $F_C$ over $\R^n$ with respect to $\supseteq$, see e.g.\ \cite{HamHeyLoeRudSch15} for further details.

Problem \eqref{eq:p} is said to be \emph{bounded} if 
\begin{equation}\label{eq:bounded}
	\exists \ell \in \R^q: \; \forall x \in \R^n:\; \Set{\ell} \preccurlyeq_C F(x),
\end{equation}
or equivalently,
$$ \exists \ell \in \R^q: \;\Set{\ell}+C \supseteq \PP.$$

A finite subset $\bar S \subseteq \dom F\coloneqq \Set{x \in \R^n \given F(x) \neq \emptyset}$, $\bar S \neq \emptyset$ is called a \emph{finite infimizer} \cite{LoeSch13} of \eqref{eq:p} if
\begin{equation}\label{eq:infatt_b}
	\PP = C + \conv \bigcup_{x \in \bar S} F(x),
\end{equation}
where we denote by $\conv Y$ the convex hull of a set $Y \subseteq \R^q$.
Condition \eqref{eq:infatt_b} can be interpreted in the sense that the infimum $\PP$ of \eqref{eq:p} is attained at the finite set $\bar S$. A point $\bar x \in \R^n$ is called a \emph{minimizer} of \eqref{eq:p} if
\begin{equation}\label{eq_min}
	F(x) \preccurlyeq_C F(\bar x),\; x \in \R^n  \;\Rightarrow \;F(\bar x) \preccurlyeq_C F(x),
\end{equation}
or equivalently,
$$ \not\exists x \in \R^n:\quad F_C(x) \supsetneq F_C(\bar x).$$
A finite infimizer $\bar{S}$ of the bounded problem \eqref{eq:p} is called a {\em solution} to \eqref{eq:p} if all elements of  $\bar{S}$ are minimizers of \eqref{eq:p}.

This solution concept for bounded polyhedral convex set optimization problems was introduced in \cite{LoeSch13}. It extends the solution concept for vector linear programs introduced in \cite{Loe11} and is a ``polyhedral variant'' of the solution concepts for set optimization problems from \cite{HeyLoe11}. 

Since $F_C$ is a polyhedral convex mapping, an H-representation of $\gr F_C = \gr F + (\Set{0} \times C)$ exists. This means, there exist matrices $A\in \R^{m\times n}$, $B\in \R^{m\times q}$ and a vector $b \in \R^m$ such that
$$ (x,y) \in \gr F_C  \iff y \in F(x)+C \iff Ax + By \geq b.$$
Denoting by $B_i$ the $i$-th row of the matrix $B$ and setting $[m]\coloneqq \Set{1,\dots,m}$, we consider the linear program
\leqnomode
\begin{gather}\label{eq:lp}\tag{LP($\bar y$)}
	 \min \sum_{i=1}^m B_i y^i \quad \text{ s.t. } \quad  \left\{ \begin{array}{l}
		y^i \in F(x)+C,\; i \in [m] \\
		\bar y \in F(x)+C
	\end{array}\right.
\end{gather}
\reqnomode
with variables $(x,y^1,\dots,y^m) \in \R^n \times \R^q \times \dots \times \R^q$ and parameter $\bar y \in \R^q$.

\begin{prop}\label{prop1}
	\eqref{eq:lp} is feasible if and only if $\bar y \in \PP$.	
\end{prop}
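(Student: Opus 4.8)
The plan is to prove both directions by unpacking the definition of feasibility of \eqref{eq:lp}. Recall that $(x, y^1, \dots, y^m)$ is feasible for \eqref{eq:lp} precisely when $y^i \in F(x) + C$ for all $i \in [m]$ and $\bar y \in F(x) + C$. Since the constraints on the $y^i$ and the constraint on $\bar y$ share the same variable $x$ but are otherwise decoupled, the key observation is that \eqref{eq:lp} is feasible if and only if there exists $x \in \R^n$ with $F(x) + C \neq \emptyset$ (so that the $y^i$ can be chosen) \emph{and} $\bar y \in F(x) + C$. Because $C \neq \emptyset$ (a cone contains $0$), the condition $F(x) + C \neq \emptyset$ is equivalent to $F(x) \neq \emptyset$, i.e.\ $x \in \dom F$; and if $\bar y \in F(x) + C$ then automatically $F(x) + C \neq \emptyset$, so the first condition is subsumed. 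Hence \eqref{eq:lp} is feasible if and only if $\exists x \in \R^n: \bar y \in F(x) + C$, which is to say $\bar y \in \bigcup_{x \in \R^n}\bigl(F(x) + C\bigr)$.

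It remains to identify $\bigcup_{x \in \R^n}\bigl(F(x) + C\bigr)$ with the upper image $\PP = C + \bigcup_{x \in \R^n} F(x)$. This is exactly the set identity $\bigcup_{x \in \R^n}\bigl(F(x)+C\bigr) = C + \bigcup_{x \in \R^n} F(x)$ recorded in the text (with reference to \cite{HeyLoe}); it follows immediately by noting that a point lies in the left-hand side iff it equals $f + c$ for some $x$, some $f \in F(x)$, and some $c \in C$, which is the same as lying in the right-hand side. Combining this with the previous paragraph gives: \eqref{eq:lp} is feasible $\iff \bar y \in \PP$.

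I expect no real obstacle here — the statement is essentially a bookkeeping exercise in the definitions. The only points requiring a moment of care are (i) making explicit that $0 \in C$, so that the $y^i$-constraints are satisfiable exactly when $F(x) \neq \emptyset$, and (ii) noting that the $y^i$-feasibility is absorbed by the $\bar y$-feasibility, so that the objective and the auxiliary variables $y^1, \dots, y^m$ play no role in the feasibility question. In writing up, I would phrase the argument as a short chain of equivalences:
\[
  \text{\eqref{eq:lp} feasible}
  \iff \exists x: \bar y \in F(x) + C
  \iff \bar y \in \bigcup_{x \in \R^n}\bigl(F(x)+C\bigr)
  \iff \bar y \in \PP.
\]
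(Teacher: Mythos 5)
Your proposal is correct and follows essentially the same route as the paper: both directions reduce to the observation that feasibility of \eqref{eq:lp} amounts to the existence of some $x$ with $\bar y \in F(x)+C$, combined with the identity $\bigcup_{x}(F(x)+C) = C + \bigcup_x F(x) = \PP$. The paper's only (cosmetic) difference is that it exhibits the explicit feasible point $(\bar x, \bar y, \dots, \bar y)$ rather than arguing that the $y^i$-constraints are subsumed.
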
 
\begin{proof} 
	By the definition of $\PP$, we have $\bar y \in \PP$ if and only if there exists $\bar x \in \R^n$ such that $\bar y \in F(\bar x)+C$. Thus feasibility of  \eqref{eq:lp} implies $\bar y \in \PP$. Moreover, $\bar y \in \PP$ implies that $(\bar x,\bar y,\dots,\bar y)$ is feasible for \eqref{eq:lp}.
\end{proof}

\begin{prop} \label{prop2} If\eqref{eq:p} is bounded then \eqref{eq:lp} is bounded for every $\bar y \in \PP$.	
\end{prop}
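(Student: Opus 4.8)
The plan is to write down, for each fixed $\bar y\in\PP$, an explicit real number that bounds the objective of \eqref{eq:lp} from below on the entire feasible set. Since \eqref{eq:lp} is an ordinary scalar linear program and is feasible by Proposition \ref{prop1}, this is precisely what ``bounded'' asks for.

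The one preliminary fact I would isolate is that $Bc\geq 0$ holds (componentwise) for every $c\in C$; this needs no hypothesis. Because $C$ is a convex cone we have $C+C=C$, so $\gr F_C + (\Set{0}\times C) = \gr F + (\Set{0}\times C) + (\Set{0}\times C) = \gr F_C$. Hence, whenever $(x_0,y_0)\in\gr F_C$, also $(x_0,y_0+kc)\in\gr F_C$ for every $k\in\N$, i.e.\ $Ax_0+By_0+k\,Bc\geq b$ for all $k\in\N$, and letting $k\to\infty$ forces $Bc\geq 0$. (Equivalently, $\Set{0}\times C$ lies in the recession cone $\Set{(u,w)\given Au+Bw\geq 0}$ of $\gr F_C$.) If $\PP=\emptyset$ the proposition is vacuous, so I may assume $\gr F_C\neq\emptyset$.

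Then I would bring in boundedness of \eqref{eq:p}: fix $\ell\in\R^q$ with $\Set{\ell}+C\supseteq\PP$ and let $(x,y^1,\dots,y^m)$ be any feasible point of \eqref{eq:lp}. For each $i\in[m]$ we have $y^i\in F(x)+C\subseteq\PP\subseteq\Set{\ell}+C$, so $y^i=\ell+c^i$ with $c^i\in C$, whence
\[
  \sum_{i=1}^m B_i y^i \;=\; \Bigl(\sum_{i=1}^m B_i\Bigr)\ell \;+\; \sum_{i=1}^m B_i c^i \;\geq\; \Bigl(\sum_{i=1}^m B_i\Bigr)\ell,
\]
because $B_i c^i$ is the $i$-th coordinate of the nonnegative vector $Bc^i$. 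The right-hand side does not depend on the feasible point, so \eqref{eq:lp} is bounded. I do not anticipate a real obstacle; the only points requiring a little care are the degenerate case $\PP=\emptyset$ and the observation that ``bounded'' for \eqref{eq:lp} means bounded below, which a constant lower bound provides.
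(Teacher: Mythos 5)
Your proof is correct and follows essentially the same route as the paper's: first derive $Bc\geq 0$ for all $c\in C$ from the fact that $\Set{0}\times C$ recedes from $\gr F_C$, then use the lower bound $\ell$ to write each $y^i=\ell+c^i$ and conclude $\sum_i B_i y^i\geq\bigl(\sum_i B_i\bigr)\ell$. The only (immaterial) difference is that you establish $Bc\geq 0$ from an arbitrary point of $\gr F_C$ and treat $\PP=\emptyset$ explicitly, whereas the paper works directly with the given $\bar y\in\PP$.
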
 
\begin{proof} 
	Let $\bar y \in \PP$. By the definition of $\PP$, there exists $\bar x \in \R^n$ such that $\bar y \in F(\bar x)+C$. For all $n \in \N$ and all $c \in C$ we have $\bar y + n c \in F(\bar x) + C$ and hence $B \bar y + n B c \geq b-A\bar x$. Thus $B c \geq 0$ for all $c \in C$. Let $\ell$ be a lower bound of \eqref{eq:p} according to \eqref{eq:bounded}. If $(x,y^1,\dots,y^m)$ is feasible for \eqref{eq:lp} then we have 
	$y^i \in F(x) + C \subseteq \Set{\ell} + C$ and hence $y^i - \ell \in C$ for all $i \in [m]$. Thus $B_i (y^i - \ell) \geq 0$ for all $i \in [m]$, which implies that \eqref{eq:lp} is bounded.
\end{proof}

\begin{prop}\label{prop3} Let $\bar y \in \PP$. If $(\bar x,\bar y^1,\dots,\bar y^m)$ is an optimal solution of $\eqref{eq:lp}$ then $\bar x$ is a minimizer of \eqref{eq:p}.	
\end{prop}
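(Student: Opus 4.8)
The plan is to argue by contraposition. Suppose $\bar x$ is \emph{not} a minimizer of \eqref{eq:p}; by the reformulation of \eqref{eq_min} there is $x \in \R^n$ with $F_C(x) \supsetneq F_C(\bar x)$, i.e.\ $F(x)+C \supsetneq F(\bar x)+C$. Using the H-representation I would write $F(\bar x)+C = \Set{y \given By \ge b - A\bar x}$ and $F(x)+C = \Set{y \given By \ge b - Ax}$. Since $(\bar x,\bar y^1,\dots,\bar y^m)$ is feasible, $\bar y^i \in F(\bar x)+C \subseteq F(x)+C$ for all $i$ and $\bar y \in F(\bar x)+C \subseteq F(x)+C$; consequently, for \emph{any} choice of points $y^i \in F(x)+C$, the tuple $(x,y^1,\dots,y^m)$ is feasible for \eqref{eq:lp}. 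The goal is then to pick such a tuple with strictly smaller objective value, contradicting optimality of $(\bar x,\bar y^1,\dots,\bar y^m)$.

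Once $x$ is fixed, the objective $\sum_{i=1}^m B_i y^i$ decouples over the independent constraints $y^i \in F(x)+C$, so its minimal value over feasible tuples with first component $x$ equals $\sum_{i=1}^m v_i(x)$, where $v_i(x) \coloneqq \min\Set{B_i y \given y \in F(x)+C}$. Because \eqref{eq:lp} has an optimal solution it is not unbounded, which — using the feasibility transfer above and holding the remaining components fixed at $\bar y^j$ — forces every $v_i(x)$ to be a real number; likewise $v_i(\bar x) \in \R$, and by global optimality of $(\bar x,\bar y^1,\dots,\bar y^m)$ together with the decoupling we get $v_i(\bar x) = B_i \bar y^i$ and $\sum_i v_i(\bar x) \le \sum_i v_i(x)$. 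Minimizing a linear functional over the larger set $F(x)+C \supseteq F(\bar x)+C$ can only decrease the value, so $v_i(x) \le v_i(\bar x)$ for each $i$; combined with $\sum_i v_i(\bar x) \le \sum_i v_i(x)$ this yields $v_i(x) = v_i(\bar x)$ for all $i \in [m]$.

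The step I expect to be the crux is converting this equality of the $m$ componentwise minima back into a set inclusion. Since $F(\bar x)+C = \Set{y \given By \ge b - A\bar x}$, the $i$-th defining inequality is valid on that set, hence $v_i(\bar x) \ge (b - A\bar x)_i$. Then for an arbitrary $y \in F(x)+C$ we obtain $B_i y \ge v_i(x) = v_i(\bar x) \ge (b - A\bar x)_i$ for every $i$, that is $By \ge b - A\bar x$, i.e.\ $y \in F(\bar x)+C$. Therefore $F(x)+C \subseteq F(\bar x)+C$, contradicting $F(x)+C \supsetneq F(\bar x)+C$. So no such $x$ exists, and $\bar x$ is a minimizer of \eqref{eq:p}. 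The only slightly delicate points are the finiteness of the $v_i(x)$ and the decoupling of the objective; both are routine consequences of \eqref{eq:lp} admitting an optimal solution.
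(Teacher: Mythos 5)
Your proof is correct; all the delicate points you flag (finiteness of the $v_i$, decoupling, attainment) do go through, since the objective separates over the independent constraints $y^i \in F(x)+C$ and unboundedness of any single row minimum would make \eqref{eq:lp} unbounded. The overall strategy is the same as the paper's (argue by contradiction from a strict inclusion $F(x)+C \supsetneq F(\bar x)+C$, transfer feasibility to $x$, and exploit the H-representation), but the closing step is organized differently. The paper runs a one-component exchange: it picks a witness $y \in (F(x)+C)\setminus(F(\bar x)+C)$, observes that $y$ violates some row, say $B_1 y < (b-A\bar x)_1 \le B_1 \bar y^1$, and swaps $y$ into the first slot to produce a feasible point with strictly smaller objective --- about three lines. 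You instead introduce the row-wise value functions $v_i(\cdot)$, deduce $v_i(x)=v_i(\bar x)$ from optimality and monotonicity under set inclusion, and then show that $v_i(x) = v_i(\bar x)\ge (b-A\bar x)_i$ for all $i$ forces $F(x)+C \subseteq F(\bar x)+C$. Your version is longer but isolates the reason the particular objective $\sum_i B_i y^i$ works: the vector of row minima over $F(\bar x)+C$ determines that polyhedron from inside, so matching optimal values already implies set equality. Either argument is acceptable; the paper's is the more economical, yours is arguably the more explanatory.
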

\begin{proof}
	Assume $\bar x$ is not a minimizer of \eqref{eq:p}, i.e., there exists $x \in \R^n$ such that $F(x)+C \supsetneq F(\bar x)+C$. Together with the feasibility of $(\bar x,\bar y^1,\dots,\bar y^m)$ for \eqref{eq:lp} this implies $\bar y^i \in F(x) + C$ for all $i \in [m]$ as well as $\bar y \in F(x) +C$. Moreover, there exists $y \in F(x)+C$ with $y \not\in F(\bar x)+C$. The latter condition implies that there is some $i \in [m]$ with $B_i y < b - A \bar x$. Without loss of generality let $i=1$. Then we have $B_1 y < b - A \bar x \leq B_1 y^1$. Then, the point $(x,y,\bar y^2,\dots,\bar y^m)$ is feasible for \eqref{eq:lp} and has a smaller objective value than the optimal solution $(x,\bar y^1,\bar y^2,\dots,\bar y^m)$, which is the desired contradiction.
\end{proof}

Proposition \ref{prop3} yields the following known result. 

\begin{cor}[{\cite[Corollary 4.5]{LoeSch13}}] If \eqref{eq:p} is feasible and bounded then \eqref{eq:p} has a solution.	
\end{cor}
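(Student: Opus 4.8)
The plan is to exhibit a finite infimizer all of whose members are minimizers, by solving \eqref{eq:lp} for a finite family of parameters that generate the upper image $\PP$. First I would collect the structural consequences of the hypotheses. Feasibility of \eqref{eq:p} gives $\dom F \neq \emptyset$, hence $\PP \neq \emptyset$. Boundedness of \eqref{eq:p} is, by the reformulation stated after \eqref{eq:bounded}, the same as $\Set{\ell} + C \supseteq \PP$ for some $\ell \in \R^q$; since $0^+(\Set{\ell}+C) = C$, this yields $0^+\PP \subseteq C$, while $\PP = C + \bigcup_{x}F(x)$ gives the reverse inclusion $C \subseteq 0^+\PP$. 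Thus the recession cone of the nonempty polyhedron $\PP$ equals $C$, and by the Minkowski--Weyl theorem we may write $\PP = \conv V + C$ for some finite nonempty set $V \subseteq \R^q$.

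Next, for each $v \in V$ I would look at \eqref{eq:lp} with $\bar y = v$. Since $v \in \PP$, Proposition~\ref{prop1} gives feasibility and Proposition~\ref{prop2} gives boundedness of this linear program, so by the fundamental theorem of linear programming it attains its minimum at some point $(\hat x_v, \hat y_v^1, \dots, \hat y_v^m)$. Proposition~\ref{prop3} then tells us that $\hat x_v$ is a minimizer of \eqref{eq:p}, and feasibility of $(\hat x_v, \hat y_v^1, \dots, \hat y_v^m)$ for \eqref{eq:lp} gives $v \in F(\hat x_v) + C$, in particular $\hat x_v \in \dom F$. Setting $\bar S \coloneqq \Set{\hat x_v \given v \in V}$, we obtain a finite, nonempty subset of $\dom F$ consisting of minimizers.

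It then remains to check that $\bar S$ is a finite infimizer, i.e.\ that $\PP = C + \conv\bigcup_{x\in\bar S}F(x)$. The inclusion ``$\supseteq$'' holds for any finite $\bar S \subseteq \dom F$: $\bigcup_{x\in\bar S}F(x) \subseteq \bigcup_{x\in\R^n}F(x) \subseteq \PP$, and $\PP$ is convex with $C \subseteq 0^+\PP$, so $C + \conv\bigcup_{x\in\bar S}F(x) \subseteq C + \PP \subseteq \PP$. For ``$\subseteq$'', note that $V \subseteq \bigcup_{x\in\bar S}F(x) + C$ by construction, whence $\conv V + C \subseteq \conv\bigl(\bigcup_{x\in\bar S}F(x)\bigr) + C$ using convexity of $C$ and $C + C = C$; since $\PP = \conv V + C$, this is exactly the desired inclusion. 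Together with Proposition~\ref{prop3} this makes $\bar S$ a solution of \eqref{eq:p}.

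Almost every step is a direct application of Propositions~\ref{prop1}--\ref{prop3} and the fundamental theorem of linear programming; the one place needing genuine (though standard) care is the reduction to finitely many parameters, namely the identity $\PP = \conv V + C$ with $V$ finite, whose essential ingredient is that boundedness forces $0^+\PP = C$. I would emphasize that $C$ is not assumed pointed, so this decomposition must come from Minkowski--Weyl and not from vertices of $\PP$, which need not exist.
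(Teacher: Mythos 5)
Your proof is correct and follows essentially the same route as the paper's: decompose $\PP = \conv V + C$ with $V$ finite, solve \eqref{eq:lp} for each $v \in V$ using Propositions~\ref{prop1}--\ref{prop3}, and collect the $x$-components into $\bar S$. You merely spell out two steps the paper leaves implicit, namely why $0^+\PP = C$ and why both inclusions in the infimizer identity hold.
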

\begin{proof}
	The upper image $\PP$ is a nonempty polyhedron with recession cone $C$. Thus it can be expressed by finitely many points $\bar y^i$ as
	$$ \PP = C +  \conv \Set{\bar y^1, \dots, \bar y^k}.$$
	Let $\bar y^i$ be a such a point. Then (LP($\bar y^i$)) is feasible by Proposition \ref{prop1} and bounded by Proposition \ref{prop2} and therefore it has an optimal solution whose $x$-component is denoted by $\bar x^i$. For all $i \in [k]$ we have $\bar y^i \in F(\bar x^i)+C$, which implies that the set $\bar S \coloneqq \Set{\bar x^1,\dots,\bar x^k}$ is an infimizer for \eqref{eq:p}. By Proposition \ref{prop3}, each $\bar x^i$ is a minimizer for \eqref{eq:p} and thus $\bar S$ is a solution to \eqref{eq:p}.
\end{proof} 

Note that in Proposition~\ref{prop3} we do not assume that \eqref{eq:p} is bounded but the existence of optimal solutions of \eqref{eq:lp} only. Conversely, if optimal solutions of \eqref{eq:lp} exist for all $\bar y \in \PP$, then \eqref{eq:p} is not necessarily bounded. Moreover, the converse implication in Proposition~\ref{prop2} does not hold.

\begin{ex} \label{ex1} Let $F:\R \rightrightarrows \R^2$ be given by its graph
	$$ \gr F \coloneqq \Set{(x,y_1,y_2) \in \R^3 \given y_1 \geq -x, \; y_2 \geq x,\; x \geq 0}$$
	and let $C \coloneqq \R^2_+$. Then for $x \in \dom F = \R_+$ we have 
	$$F(x) = F(x)+C = \left\{\begin{pmatrix}
		-x \\ \phantom{-}x
	\end{pmatrix}\right\}+C.$$
	Obviously, \eqref{eq:p} is not bounded. But \eqref{eq:lp} is of the form
	$$ \min y_1^1 + y_2^2 \text{ s.t. } y_1^1 \geq -x,\; y_2^1 \geq x,\; y_1^2 \geq -x,\; y_2^2 \geq x, \; \bar y_1 \geq -x,\; \bar y_2 \geq x, \; x \geq 0$$
	and thus for arbitrary feasible points $(x,y^1,y^2) \in \R \times \R^2\times \R^2$ the objective function is bounded below by $0$.	
\end{ex}

We close this section by noting that an H-representation of $\gr F_C$, which we used to show existence of solutions, is typically not known in practice. Usually, $\gr F_C$ is given by a P-representation, that is, by matrices $M_x \in \R^{m\times n}$, $M_y \in \R^{m\times q}$, $M_z \in \R^{m\times k}$ and a vector $c \in \R^m$ such that
$$ (x,y) \in \gr F_C \iff \exists z \in \R^k:\; M_x x + M_y y + M_z z \geq c.$$
An H-representation can be obtained from a P-representation by solving a polyhedral projection problem, see e.g.\ \cite{LoeWei16}. Equivalently, it can be obtained by solving a multiple objective linear program with $n+q+1$ objective functions \cite{LoeWei16}.
 However, both (mutually equivalent) problems are practicable only in case of $n+q$ being small. The method presented in this section is therefore not suitable for being used as a solution method, except for very small polyhedral convex set optimization problems. In particular, only a small number of variables is possible, which is unacceptably restricting.\ The approach in \cite{LoeSch13} is preferred as a solution method because it mainly operates in $\R^q$. Even though the results in \cite{LoeSch13} are formulated for H-representations of the involved graphs only, they can easily be extended to P-representations. For the computation of finite infimizers this was shown in \cite{HeyLoe} and for the computation of minimizers this is straightforward as the graph of the involved set-valued maps only appears in the feasible set of certain linear programs.

\section{The general case}

Polyhedral convex set optimization problems which are not necessarily bounded are investigated in \cite{HeyLoe}: Finite infimizers (sets of feasible points and directions where the infimum is attained) are defined and it is shown that they can be computed as in the bounded case by solving an associated vector linear program. Moreover, two variants of possible definitions for minimizers (points and directions which are minimal) are discussed. The authors of \cite{HeyLoe} point out examples of unbounded polyhedral convex set optimization problems where the condition
\begin{equation}\label{eq:exvlp}
	L(\PP) \cap (-C) \subseteq C
\end{equation}
is satisfied but solutions do not exist. Here $L(\PP)\coloneqq 0^+\PP \cap (-0^+\PP)$ denotes the {\em lineality space} of $\PP$. Furthermore, $0^+Y$ denotes the {\em recession cone} of a convex polyhedron $Y$, see e.g.\ \cite{Rockafellar70}. While  \eqref{eq:exvlp} implies the existence of solutions in case of vector linear programs \cite{Weissing20}, the condition turned out to be not sufficient for the existence of solutions of a polyhedral convex set optimization problem \cite{HeyLoe}. 


In this article we propose a concept of minimality which is slightly different compared to \cite{HeyLoe} but very natural as we just use minimality with respect to the ordering cone $C$. In contrast to the solution concept presented below, in \cite[Definition 15]{HeyLoe} minimizing points are defined by the ordering cone $0^+\PP$, which is a superset of $C$. A minimizer as defined in \eqref{eq_min} with respect to an ordering cone $C$ is not necessarily a minimizer with respect to an ordering cone $D$ which is a subset or a superset of $C$. This phenomenon is illustrated by the following example. Note that the situation is different for a vector ordering. Here it is well-known that minimality with respect to a pointed convex cone $C$ implies minimality with respect to any pointed convex cone $D \subseteq C$.

\begin{ex}
	Let  $A_1 \coloneqq \Set{(0,0)^T}$, $A_2 \coloneqq \Set{(0,0)^T, (0,2)^T)}$, $A_3 \coloneqq \Set{(-1,1)}$ and consider the family of sets $\AA \coloneqq \Set{A_1,A_2,A_3}$. We define three ordering cones $C_1\coloneqq \cone \Set{(1,0)^T, (1,1)^T)}$, $C_2 \coloneqq \R^2_+$ and  $C_3\coloneqq \cone \Set{(1,0)^T, (-1,1)^T)}$. So we have $C_1 \subsetneq C_2 \subsetneq C_3$. We say that $A_i$ is $C_k$-minimal in $\AA$ if $A_j \preccurlyeq_{C_k} A_i$ implies $A_i \preccurlyeq_{C_k} A_j$. All three elements $A_1,A_2,A_3$ are $C_2$-minimal in $\AA$. But $A_3$ is not $C_3$-minimal in $\AA$ and $A_1$ is not $C_1$-minimal in $\AA$.
\end{ex} 


The modified solution concept we present below in this article was already propagated by Andreas H. Hamel and Frank Heyde in discussions about the results and examples of \cite{HeyLoe}. Here we follow their idea to accept that new phenomena can appear when vector linear programming is generalized to polyhedral convex set optimization. In addition we present a new motivation for this approach by providing an equivalent characterization of minimality and the associated solution concept. This characterization is unique in both the general polyhedral set-valued and the special vector-valued case.  

The \emph{recession mapping} of $F:\R^n\rightrightarrows \R^q$ is the set-valued mapping $0^+F:\R^n\rightrightarrows \R^q$ whose graph is the recession cone of the graph of $F$, i.e., we have
$$ \gr 0^+F = 0^+\gr F.$$
Using the convention
$$ y \in F_C(x) \iff y \in F(x)+C \iff Ax + B y \geq b,$$
as introduced in the previous section, we obtain
$$ y \in (0^+F)(x) + C \iff y \in (0^+F_C)(x) \iff Ax + By \geq 0.$$
The first equivalence follows from the fact 
$0^+(\gr F + (\Set{0} \times C)) = 0^+ \gr F + (\Set{0} \times C)$, which is true because the sets being involved are polyhedral convex, see e.g.\ \cite{Rockafellar70}. As a  direct consequence we also obtain $\dom (0^+F) = \dom (0^+F_C) = 0^+(\dom F)$.

We introduce the homogeneous version of \eqref{eq:p} as
\leqnomode
\begin{gather}\label{eq:phom}\tag{P$_{\text{hom}}$}
  \min (0^+F_C)(x)\; \text{ s.t. }\; x \in \R^n
\end{gather}
\reqnomode
and the homogeneous version of \eqref{eq:lp} as
\leqnomode
\begin{gather}\label{eq:lphom}\tag{LP$_{\text{hom}}$}
	 \min \sum_{i=1}^m B_i y^i \quad \text{ s.t. } \quad  \left\{ \begin{array}{l}
		y^i \in (0^+F)(x)+C,\; i \in [m] \\
		0 \in (0^+F)(x)+C.
	\end{array}\right.
\end{gather}
\reqnomode
Note that we obtain \eqref{eq:lphom} from \eqref{eq:lp} by setting all right-hand side entries of affine inequalities to zero.

A tuple $(\bar{S},\hat{S}$) of finite sets $\bar{S} \subseteq \dom F$, $\bar{S} \neq \emptyset$ and $\hat{S} \subseteq \dom (0^+F) \setminus \{0\}$ is called a {\em finite infimizer} \cite{HeyLoe} for problem \eqref{eq:p} if 
\begin{equation}{\label{finiteinfimizer}}
	\PP = C +  \conv \bigcup_{x \in \bar{S}} F(x) + \cone \bigcup_{x \in \hat{S}} (0^+F)(x).
\end{equation}
Here $\cone Y$ denotes the \emph{conic hull}, i.e., the smallest (with respect to set inclusion $\subseteq$) convex cone containing the set $Y$. In particular, we set $\cone \emptyset = \Set{0}$, which ensures that \eqref{finiteinfimizer} reduces to \eqref{eq:infatt_b} if $\hat S = \emptyset$.
 
An element $\bar{x} \in \dom F$ is called a {\em minimizer} or {\em minimizing point} for \eqref{eq:p} if 
\begin{equation*}
		F(x) \preccurlyeq_{C} F(\bar x),\; x \in \R^n \; \Rightarrow \; F(\bar x) \preccurlyeq_C F(x),
\end{equation*}
or equivalently
$$ \not\exists x \in \R^n:\; F_C(x) \supsetneq F_C(\bar x).$$
A nonzero element $\hat{x} \in \dom (0^+F)$ is called a {\em minimizing direction} for \eqref{eq:p} if
\begin{align*}
		(0^+F)(x) \preccurlyeq_C (0^+F)(\hat{x}),\; x \in \R^n \; \Rightarrow \; (0^+F)(\hat x) \preccurlyeq_C (0^+F)(x),
\end{align*}
or equivalently
$$ \not\exists x \in \R^n:\; (0^+F_C)(x) \supsetneq (0^+F_C)(\hat x).$$
A finite infimizer $(\bar{S},\hat{S})$ is called a {\em solution} to \eqref{eq:p} if all elements of  $\bar{S}$ are minimizing points and all elements of $\hat{S}$ (if any) are minimizing directions.

\begin{prop}\label{prop4} 
	Let \eqref{eq:p} be feasible.
	The following is equivalent:
	\begin{enumerate}[(i)]
		\item There exists $\bar y \in \PP$ such that \eqref{eq:p} has a minimizer $\bar x$ with $\bar y \in F(\bar x)+C$.
		\item For all $\bar y \in \PP$, \eqref{eq:p} has a minimizer $\bar x$ with $\bar y \in F(\bar x)+C$.
		\item There exists $\bar y \in \PP$ such that \eqref{eq:lp} has an optimal solution.
		\item For all $\bar y \in \PP$, \eqref{eq:lp} has an optimal solution.
		\item \eqref{eq:lphom} has an optimal solution.
		\item $0$ is a minimizer of \eqref{eq:phom}.
	\end{enumerate}
\end{prop}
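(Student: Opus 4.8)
The plan is to run the cycle $(i)\Rightarrow(vi)\Rightarrow(v)\Rightarrow(iv)\Rightarrow(ii)\Rightarrow(i)$ and to insert $(iii)$ via the equivalence $(iii)\Leftrightarrow(v)$. Feasibility of \eqref{eq:p} enters only through $\PP\neq\emptyset$, so $(ii)\Rightarrow(i)$ and $(iv)\Rightarrow(iii)$ are immediate, and $(iv)\Rightarrow(ii)$ is just Proposition~\ref{prop3}: for each $\bar y\in\PP$ an optimal solution $(\bar x,\bar y^1,\dots,\bar y^m)$ of \eqref{eq:lp} has $\bar x$ a minimizer of \eqref{eq:p}, while $\bar y\in F(\bar x)+C$ holds because that solution is feasible.

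Next I would isolate a \emph{boundedness transfer} observation yielding $(iii)\Leftrightarrow(iv)\Leftrightarrow(v)$ in one stroke. Using the H-representation $Ax+By\geq b$ of $\gr F_C$, the recession cone of the feasible set of \eqref{eq:lp} is $\Set{(d,d^1,\dots,d^m)\given Ad+Bd^i\geq 0,\ i\in[m],\ Ad\geq 0}$, which is exactly the feasible set $K$ of \eqref{eq:lphom}; both problems carry the objective $\sum_{i=1}^m B_iy^i$, which along a direction in $K$ evaluates to $\sum_{i=1}^m B_id^i$. Since $K$ is a cone containing the origin, \eqref{eq:lphom} is feasible and has an optimal solution iff its objective is nonnegative on $K$, i.e.\ iff it is bounded below. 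By the elementary fact that a feasible linear program is bounded below precisely when its objective is bounded below on the recession cone of its feasible set, and by Proposition~\ref{prop1} (feasibility of \eqref{eq:lp} for every $\bar y\in\PP$ when \eqref{eq:p} is feasible), \eqref{eq:lp} has an optimal solution iff its objective is bounded below on $K$ iff \eqref{eq:lphom} has an optimal solution --- a condition independent of $\bar y$. This gives $(iii)\Leftrightarrow(iv)\Leftrightarrow(v)$.

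For $(vi)\Rightarrow(v)$ I would argue contrapositively. If \eqref{eq:lphom} has no optimal solution, then, being feasible over the cone $K$, it is unbounded, so some $(x,y^1,\dots,y^m)\in K$ satisfies $\sum_{i=1}^m B_iy^i<0$; hence $B_jy^j<0$ for some $j\in[m]$, so $y^j\in(0^+F_C)(x)$ but $y^j\notin(0^+F_C)(0)$, and $Ax\geq 0$ gives $(0^+F_C)(x)\supseteq(0^+F_C)(0)$. Therefore $(0^+F_C)(x)\supsetneq(0^+F_C)(0)$, so $0$ is not a minimizer of \eqref{eq:phom}.

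The heart of the proof is $(i)\Rightarrow(vi)$. Suppose $\bar x$ is a minimizer of \eqref{eq:p} with $\bar y\in F(\bar x)+C$ (so $F_C(\bar x)\neq\emptyset$) but $0$ is not a minimizer of \eqref{eq:phom}; pick $\hat x$ with $(0^+F_C)(\hat x)\supsetneq(0^+F_C)(0)$. From $0\in(0^+F_C)(0)\subseteq(0^+F_C)(\hat x)$ we get $A\hat x\geq 0$, and the H-representation gives directly $F_C(\bar x)\subseteq F_C(\bar x+\hat x)$ as well as $F_C(\bar x)+(0^+F_C)(\hat x)\subseteq F_C(\bar x+\hat x)$. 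Choose $w\in(0^+F_C)(\hat x)\setminus(0^+F_C)(0)$. Since $(0^+F_C)(0)=\Set{y\given By\geq 0}$ is the recession cone of the nonempty polyhedron $F_C(\bar x)$, the inclusion $F_C(\bar x)+w\subseteq F_C(\bar x)$ is impossible: iterating it would force $F_C(\bar x)+nw\subseteq F_C(\bar x)$ for all $n\in\N$ and hence, by convexity, that $w$ is a recession direction of $F_C(\bar x)$, i.e.\ $w\in(0^+F_C)(0)$, a contradiction. Thus $F_C(\bar x+\hat x)$ contains a point of $F_C(\bar x)+w$ lying outside $F_C(\bar x)$, and together with $F_C(\bar x)\subseteq F_C(\bar x+\hat x)$ this yields $F_C(\bar x+\hat x)\supsetneq F_C(\bar x)$, contradicting the minimality of $\bar x$. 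This closes the cycle. The delicate point is precisely this last step --- that strictly enlarging the recession fibre in a single direction $\hat x$ genuinely permits enlarging $F_C(\bar x)$ along $\bar x\mapsto\bar x+\hat x$; everything else reduces to polyhedral bookkeeping with the fixed matrices $A,B$ and the standard LP boundedness criterion.
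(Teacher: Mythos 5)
Your proposal is correct, and its skeleton (trivial implications, Proposition~\ref{prop3} for (iv)$\Rightarrow$(ii), contrapositive arguments for (vi)$\Rightarrow$(v) and (i)$\Rightarrow$(vi)) matches the paper's. The two places where you genuinely diverge are both sound and arguably cleaner. First, for the step linking (iii), (iv) and (v) the paper argues (v)$\Rightarrow$(iv) via linear programming duality: the dual of \eqref{eq:lphom} is feasible and has the same feasible set as the dual of \eqref{eq:lp}, so feasibility of both primal and dual gives optimal solutions for every $\bar y \in \PP$; you instead observe that the feasible set of \eqref{eq:lphom} is exactly the recession cone of the feasible set of \eqref{eq:lp} (independent of $\bar y$, since the recession cone of $\Set{z \given Mz \geq c}$ does not depend on $c$) and invoke the standard criterion that a feasible LP has an optimal solution iff its objective is nonnegative on that cone. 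This buys you (iii)$\Leftrightarrow$(iv)$\Leftrightarrow$(v) in one stroke, whereas the paper reaches (iii)$\Rightarrow$(iv) only by going around the whole cycle. Second, in (i)$\Rightarrow$(vi) the paper fixes the particular $\bar y$ from (i), shows $\bar y + n y \in F_C(\bar x + n x)$ for all $n$, and uses $By \not\geq 0$ to conclude $\bar y + ny \notin F_C(\bar x)$ for $n$ large; you instead use $F_C(\bar x) + (0^+F_C)(\hat x) \subseteq F_C(\bar x + \hat x)$ together with the fact that $(0^+F_C)(0) = \Set{y \given By \geq 0}$ is the recession cone of the nonempty polyhedron $F_C(\bar x)$, so $F_C(\bar x) + w \subseteq F_C(\bar x)$ would force $w \in (0^+F_C)(0)$. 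This avoids the ``sufficiently large $n$'' bookkeeping and makes explicit where nonemptiness of $F_C(\bar x)$ (i.e.\ condition (i) rather than mere feasibility) is used. Both routes are complete; yours trades duality for recession-cone geometry.
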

\begin{proof} 
(vi) $\Rightarrow$ (v): Assume that \eqref{eq:lphom} has no optimal solution. Since $0 \in \R^{n+mq}$ is feasible, \eqref{eq:lphom} must be unbounded. Thus \eqref{eq:lphom} has a feasible point $(x,y^1,\dots,y^m)$ with negative objective value. Feasibility includes the condition $0 \geq - Ax$. Therefore, any $y \in (0^+F)(0)+C$ satisfies $By \geq 0 \geq -Ax$. This can be written as $y \in (0^+F)(x)+C$. Thus we have the inclusion $(0^+ F)(x) + C \supseteq (0^+ F)(0) + C$.   
The negative objective value implies the existence of $i \in [m]$ with $B_i y^i < 0$. Thus $B y^i \not\geq 0$, which can be expressed as $y^i \not\in (0^+F)(0)+C$. Since $y^i \in (0^+F)(x)+C$, we obtain $(0^+ F)(x) + C \supsetneq (0^+ F)(0) + C$ which means that
$0$ is not a minimizer of \eqref{eq:phom}. 		
	
(v) $\Rightarrow$ (iv): By Proposition \ref{prop1}, \eqref{eq:lp} is feasible for every $\bar y \in \PP$. The dual problem
of \eqref{eq:lphom} is feasible, by (v) and linear programming duality. It has the same feasible set as the dual program of \eqref{eq:lp}. In particular, the feasible set of the dual programs does not depend on $\bar y \in \PP$. For every $\bar y \in \PP$, both \eqref{eq:lp} as well as its dual program is feasible. Linear programming duality yields that both have optimal solutions.

[(iv) $\Rightarrow$ (iii)] and [(ii) $\Rightarrow$ (i)]:  Both statements are obvious since \eqref{eq:p} is feasible by assumption and therefore $\PP \neq \emptyset$.

[(iv) $\Rightarrow$ (ii)] and [(iii) $\Rightarrow$ (i)]: Both statements follow from Proposition \ref{prop3}.

(i) $\Rightarrow$ (vi): Assume that (vi) is violated, i.e., there exists $x \in \R^n$ such that $(0^+F)(x)+C \supsetneq (0^+F)(0)+C$. Since $0 \in (0^+F)(0)+C$ we have $0 \in (0^+F)(x)+C$ and thus $0 \geq - Ax$. Let $y \in F(\bar x)+C$, i.e., $B y \geq b - A\bar x$. For all $n \in \N$, we obtain $B y \geq b - A(\bar x + n x)$, i.e., $y \in F(\bar x + n x)+C$. Thus, for all $n \in \N$, the inclusion $F(\bar x + nx)+C \supseteq F(\bar x)+C$ holds. Taking some $y \in (0^+F)(x)+C$ such that $y \not\in (0^+F)(0)+C$, we have $B y \geq -A x$ and $By \not\geq 0$. For $\bar y$ and $\bar x$ from (i), the first inequality implies that $\bar y + n y \in F(\bar x + n x) + C$ for all $n \in \N$. The statement $By \not\geq 0$ yields that $\bar y + n y \not\in F(\bar x) + C$ for sufficiently large $n$. For this $n$ we have $F(\bar x + nx)+C \supsetneq F(\bar x)+C$ which means that $\bar x$ is not a minimizer of \eqref{eq:p}.
\end{proof}

\begin{thm}\label{thm1} There exists a solution for \eqref{eq:p} if and only if \eqref{eq:p} is feasible and one of the six equivalent conditions (i)-(vi) in Proposition \ref{prop4} is satisfied.	
\end{thm}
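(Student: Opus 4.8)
The plan is to prove the two implications separately, reusing for the reverse direction the argument of the bounded case (the proof of \cite[Corollary 4.5]{LoeSch13} reproduced above) together with Proposition~\ref{prop4}, applied both to \eqref{eq:p} and to its homogeneous version \eqref{eq:phom}. For the ``only if'' part, suppose $(\bar S,\hat S)$ is a solution of \eqref{eq:p}. Since $\emptyset\neq\bar S\subseteq\dom F$, problem \eqref{eq:p} is feasible, and picking any $\bar x\in\bar S$ (a minimizing point) and any $\bar y\in F(\bar x)\subseteq F(\bar x)+C\subseteq\PP$ exhibits a point of $\PP$ at which \eqref{eq:p} has a minimizer, i.e.\ condition~(i) of Proposition~\ref{prop4} holds. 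Nothing more is needed here.

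For the ``if'' part, assume \eqref{eq:p} is feasible, so that by Proposition~\ref{prop4} all of (i)--(vi) hold. Write the nonempty polyhedron $\PP$ as $\PP=\conv\{\bar y^1,\dots,\bar y^k\}+0^+\PP$. For each $i\in[k]$, condition~(iv) makes \eqref{eq:lp} with parameter $\bar y^i$ solvable; its optimal solution has an $x$-component $\bar x^i$ which is a minimizing point by Proposition~\ref{prop3}, and feasibility gives $\bar y^i\in F(\bar x^i)+C$. I set $\bar S\coloneqq\{\bar x^1,\dots,\bar x^k\}$, a nonempty finite subset of $\dom F$ consisting of minimizing points.

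For the directions I would exploit that \eqref{eq:phom} is itself a polyhedral convex set optimization problem, with the same $A,B$ and right-hand side $0$, and that its recession problem is again \eqref{eq:phom} (the recession cone of a cone is the cone itself). In particular \eqref{eq:phom} is feasible because $\dom(0^+F)=0^+(\dom F)\neq\emptyset$, its condition~(v) is literally ``\eqref{eq:lphom} is solvable'' (the homogeneous LP depends only on the recession data), and its condition~(vi) is literally ``$0$ is a minimizer of \eqref{eq:phom}''; so (v) and (vi) of \eqref{eq:p} transfer verbatim. Hence Proposition~\ref{prop4} applied to \eqref{eq:phom} gives its condition~(iv): for every $\bar y$ in the upper image of \eqref{eq:phom} the linear program obtained from \eqref{eq:lphom} by replacing the constraint $0\in(0^+F)(x)+C$ with $\bar y\in(0^+F)(x)+C$ is solvable, and by Proposition~\ref{prop3} its $x$-component is a minimizer of \eqref{eq:phom}, hence a minimizing direction of \eqref{eq:p} whenever it is nonzero. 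The upper image of \eqref{eq:phom} equals $0^+\PP$, as is easily seen from the H-representations (since $\PP$ is the projection of $\gr F_C$ onto the $y$-coordinates, and recession cones commute with projections of polyhedra; cf.\ \cite{HeyLoe}). So I write $0^+\PP=\cone\{\hat y^1,\dots,\hat y^l\}$, solve the above program for each $\hat y^j$ to obtain $\hat x^j\in\dom(0^+F)$ with $\hat y^j\in(0^+F)(\hat x^j)+C$, and let $\hat S$ consist of those $\hat x^j$ that are nonzero.

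It then remains to check that $(\bar S,\hat S)$ is a finite infimizer, i.e.\ that $\PP$ equals $\PP'\coloneqq C+\conv\bigcup_{x\in\bar S}F(x)+\cone\bigcup_{x\in\hat S}(0^+F)(x)$. The inclusion $\PP'\subseteq\PP$ is clear, since $\conv\bigcup_{x\in\bar S}F(x)\subseteq\PP$ while $C$ and each $(0^+F)(\hat x^j)$ lie in $0^+\PP$. For $\PP\subseteq\PP'$ it suffices, as $\PP=\conv\{\bar y^1,\dots,\bar y^k\}+\cone\{\hat y^1,\dots,\hat y^l\}$ and $\PP'$ is closed, to show $\bar y^i\in\PP'$ and $\hat y^j\in 0^+\PP'$; the first holds because $\bar y^i\in F(\bar x^i)+C\subseteq\PP'$, and for the second, if $\hat x^j\neq 0$ then $\hat y^j\in(0^+F)(\hat x^j)+C\subseteq 0^+\PP'$. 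The one point that requires care, and which I expect to be the main obstacle, is the excluded case $\hat x^j=0$: there $\hat y^j\in(0^+F)(0)+C$, and the fix is to observe that every element of $(0^+F)(0)$ is a recession direction of each polyhedron $F(\bar x^i)$ and hence lies in $0^+\big(\conv\bigcup_{x\in\bar S}F(x)\big)\subseteq 0^+\PP'$, so $\hat y^j\in 0^+\PP'$ in this case as well. This yields $\PP=\PP'$; since all points of $\bar S$ are minimizing points and all directions of $\hat S$ are minimizing directions, $(\bar S,\hat S)$ is a solution of \eqref{eq:p}, which completes the reverse implication.
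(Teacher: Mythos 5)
Your proof is correct and follows essentially the same route as the paper's: the forward direction via condition (i) of Proposition \ref{prop4}, and the reverse direction via a V-representation of $\PP$, using Proposition \ref{prop4} for the points and Proposition \ref{prop4} applied to \eqref{eq:phom} (whose homogeneous problem is itself) for the directions. Your explicit verification of the finite-infimizer identity, including the case $\hat x^j=0$ via $(0^+F)(0)\subseteq 0^+F(\bar x^i)$, correctly fills in a step the paper dispatches with ``It follows that\dots''.
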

\begin{proof}
	``$\Rightarrow$'': Let $(\bar S, \hat S)$ be a solution to \eqref{eq:p}. Then $\bar S \neq \emptyset$, in particular, \eqref{eq:p} is feasible. Let $\bar x \in \bar S \subseteq \dom F$ and $\bar y \in F(\bar x) + C$. Then $\bar x$ is a minimizing point of \eqref{eq:p} and altogether we obtain that condition (i) of Proposition \ref{prop4} is satisfied for $\bar y$ and $\bar x$. 
	
	``$\Leftarrow$'': Feasibility of \eqref{eq:p} implies $\PP \neq \emptyset$. Thus $\PP$ has a V-representation
	$$ \PP = \conv \{\bar y^1,\dots,\bar y^s\} + \cone \{\hat y^1,\dots,\hat y^t\},$$
	where $s \geq 1$ and $t \geq 0$. By Proposition \ref{prop4} (ii), for every $i \in [s]$ there is a minimizer $\bar x^i$ for \eqref{eq:p} such that $\bar y^i \in F(\bar x^i) + C$. Moreover, by Proposition \ref{prop4} applied to \eqref{eq:phom} instead of \eqref{eq:p} (note that the homogeneous problem of \eqref{eq:phom} is \eqref{eq:phom}), for every $j \in [t]$ there is a minimizer $\hat x^i$ for \eqref{eq:phom} (which is a minimizing direction for \eqref{eq:p}) such that $\hat y^i \in (0^+F)(\hat x^i) + C$. It follows that $(\bar S, \hat S)$ for $\bar S \coloneqq \Set{\bar x^i \given i \in [s]}$ and $\hat S \coloneqq \Set{\hat x^j \given j \in [t],\, \hat x^j \neq 0}$ provides a solution to \eqref{eq:p}.
\end{proof}

We close this article by considering the particular case of a vector linear program
	\leqnomode
	\begin{gather}\label{eq:vlp}\tag{VLP}
	  \min\text{$_C$} Mx\; \text{ s.t. }\; A x \geq b,
	\end{gather}
	\reqnomode
where $M \in \R^{q\times n}$, $A \in \R^{m \times n}$, $b \in \R^m$ and $C \subseteq \R^q$ being a polyhedral convex ordering cone. A set-valued mapping $F$ is defined as
$$F(x)\coloneqq \left\{ \begin{array}{cl}
		\Set{Mx} &\text{  if } Ax \geq b \\
		\emptyset  &\text{ otherwise. }
	\end{array}\right. $$
With this $F$ and $C$, a tuple $(\bar S,\hat S)$ is a solution to \eqref{eq:p} if and only if it is a solution to \eqref{eq:vlp}, compare \cite{LoeWei16, Weissing20} for more details. 

We recover the following known result.	

\begin{cor}[{\cite[Theorem 4.1]{Weissing20}}]\label{cor2}
	A solution to \eqref{eq:vlp} exists if and only if \eqref{eq:vlp} is feasible and condition \eqref{eq:exvlp} is satisfied.
\end{cor}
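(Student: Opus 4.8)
The plan is to derive Corollary~\ref{cor2} from Theorem~\ref{thm1} by spelling out condition~(vi) of Proposition~\ref{prop4} for the set-valued map $F$ attached to \eqref{eq:vlp}. Recall that, for this $F$ and $C$, a tuple $(\bar S,\hat S)$ solves \eqref{eq:p} if and only if it solves \eqref{eq:vlp}; and since $\dom F=\Set{x\in\R^n\given Ax\geq b}$, problem \eqref{eq:p} is feasible precisely when \eqref{eq:vlp} is. By Theorem~\ref{thm1} it therefore suffices to prove that, whenever \eqref{eq:vlp} is feasible, condition~(vi) --- that $0$ be a minimizer of \eqref{eq:phom} --- is equivalent to \eqref{eq:exvlp}.

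First I would rewrite condition~(vi) in explicit terms. From the convention $y\in(0^+F)(x)+C\iff Ax+By\geq 0$ one reads off that $(0^+F_C)(x)=\Set{Mx}+C$ when $Ax\geq 0$ and $(0^+F_C)(x)=\emptyset$ otherwise; in particular $(0^+F_C)(0)=C$. Using $0\in C$ and $C+C\subseteq C$, for $x$ with $Ax\geq 0$ one has $\Set{Mx}+C\supseteq C\iff -Mx\in C$ and $\Set{Mx}+C=C\iff(Mx\in C\text{ and }-Mx\in C)$. Hence $(0^+F_C)(x)\supsetneq(0^+F_C)(0)$ holds for some $x$ if and only if there exists $x\in\R^n$ with $Ax\geq 0$, $-Mx\in C$ and $Mx\notin C$, and condition~(vi) is precisely the negation of this. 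Next I would identify the recession cone of the upper image: by feasibility, $\PP=C+\Set{Mx\given Ax\geq b}$ is a nonempty polyhedron, so the standard recession-cone calculus for polyhedra (see e.g.\ \cite{Rockafellar70}) gives $0^+\PP=C+MK$ with $K\coloneqq\Set{x\in\R^n\given Ax\geq 0}$. Since $C\subseteq 0^+\PP$, any $z$ with $z\in 0^+\PP$ and $-z\in C$ satisfies $-z\in 0^+\PP$ as well, hence $z\in L(\PP)$; thus $L(\PP)\cap(-C)=0^+\PP\cap(-C)=(C+MK)\cap(-C)$, and \eqref{eq:exvlp} is equivalent to $(C+MK)\cap(-C)\subseteq C$.

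Finally I would check that the two reformulated conditions coincide. If condition~(vi) fails, pick $x$ with $Ax\geq 0$, $-Mx\in C$, $Mx\notin C$; then $z\coloneqq Mx$ lies in $MK\subseteq 0^+\PP$ and in $-C$, so $z\in L(\PP)\cap(-C)$ while $z\notin C$, i.e.\ \eqref{eq:exvlp} fails. Conversely, if \eqref{eq:exvlp} fails, choose $z\in(C+MK)\cap(-C)$ with $z\notin C$ and write $z=c_0+Mx_0$, $c_0\in C$, $x_0\in K$. Then $-Mx_0=-z+c_0\in C+C\subseteq C$, so $(0^+F_C)(x_0)=\Set{Mx_0}+C\supseteq C=(0^+F_C)(0)$, and this inclusion is strict because $\Set{Mx_0}+C=C$ would force $Mx_0\in C$ and hence $z=c_0+Mx_0\in C$. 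Therefore $0$ is not a minimizer of \eqref{eq:phom}, i.e.\ condition~(vi) fails. Combining this equivalence with Theorem~\ref{thm1} and the reductions above yields the corollary.

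\textbf{Main obstacle.} None of the steps is deep, but two points need care: keeping the equivalences ``$\Set{Mx}+C\supseteq C$'' and ``$\Set{Mx}+C=C$'' separate so that the strict inclusion $\supsetneq$ is treated correctly, and justifying $0^+\PP=C+MK$, since additivity of recession cones under Minkowski sums (and compatibility with linear images) is a feature of the polyhedral setting --- the same polyhedrality that underlies the whole development.
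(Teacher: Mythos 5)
Your proposal is correct and follows essentially the same route as the paper: both reduce the statement to condition (vi) of Proposition \ref{prop4} via Theorem \ref{thm1}, characterize the strict inclusion $\Set{Mx}+C\supsetneq C$ by $Mx\in -C\setminus C$, and replace $L(\PP)\cap(-C)$ by $0^+\PP\cap(-C)$ using $C\subseteq 0^+\PP$. The only cosmetic difference is that you justify $0^+\PP=C+MK$ by polyhedral recession-cone calculus, whereas the paper cites the fact that $0^+\PP$ is the upper image of \eqref{eq:phom}; these amount to the same identity.
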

\begin{proof} By a straightforward argumentation, we see that 
	$$ C + \Set{y} \supsetneq C \iff y \in - C \setminus C$$
	holds for $y \in \R^q$, where we only need that $C \subseteq \R^q$ is a convex cone here.
	Since $C \subseteq 0^+\PP$, \eqref{eq:exvlp} is equivalent to 
	\begin{equation}\label{eq:exvlp1}
		0^+\PP \cap (-C) \subseteq C.
	\end{equation}
	Assuming that $0$ is not a minimizer of \eqref{eq:phom}, we find some $x \in \R^n$ such that
	$(0^+F_C)(x) \supsetneq (0^+ F_C)(0)$. By the special form of $F$, this means there exists $x \in \R^n$ with $A x \geq 0$ such that $\Set{Mx}+C \supsetneq C$. This implies $Mx \in -C\setminus C$ and $Mx \in 0^+ \PP$. Thus \eqref{eq:exvlp1} is violated. 
	
	To prove the converse implication, let \eqref{eq:exvlp1} be violated, i.e., there exists some $y \in 0^+\PP \cap (-C \setminus C)$.
	Since $0^+\PP$ is the upper image of \eqref{eq:phom} (which is a consequence of e.g. \cite[Proposition 3]{HeyLoe}), there exists some $x \in \R^n$ with $Ax \geq 0$ such that $y \in \Set{Mx} + C$, whence $\Set{y} + C \subseteq \Set{Mx} + C$. From $y \in -C\setminus C$, we conclude $C + \Set{y} \supsetneq C$. Together we obtain 
 $(0^+ F_C)(x) = \Set{Mx} + C \supsetneq C = (0^+ F_C)(0)$, i.e., $0$ is not a minimizer of \eqref{eq:phom}.	
	
	The statement now follows from Theorem \ref{thm1}.
\end{proof}

The following example from \cite{HeyLoe} shows that the statement of Corollary \ref{cor2} for vector linear programs is not true for arbitrary polyhedral convex set optimization problems.
\begin{ex}\label{ex5}
	Let $C=\R^2_+$ and let $F: \R \rightrightarrows \R^2$ be defined by 
		$$ \gr F = \cone \left\{ \begin{pmatrix}1 \\0 \\ 0 \end{pmatrix},\; \begin{pmatrix} \phantom{-}1 \\ \phantom{-}2 \\-1 \end{pmatrix} \right\}.$$
		Since $\gr F$ is a cone, we have $F=0^+F$. Thus \eqref{eq:p} coincides with \eqref{eq:phom}. For $x \in \dom F = \R_+$, it can be easily verified that
		$$ (0^+F)(x) = F(x) = \conv\left\{ \begin{pmatrix} 0 \\ 0 \end{pmatrix},\; \begin{pmatrix} 2 x \\-x \end{pmatrix} \right\}.$$
		This implies $(0^+F_C)(1) \supsetneq (0^+F_C)(0)$. Thus $0$ is not a minimizer of \eqref{eq:phom}. By Theorem \ref{thm1}, the problem does not have a solution.
		
		However, the upper image is 
		$$\PP = 0^+\PP = \Set{y \in \R^2 \given y_1 + 2 y_2 \geq 0,\; y_1\geq 0}$$
		and we see that condition \eqref{eq:exvlp} is satisfied.
\end{ex}
  
  

\end{document}